\theoremstyle{plain}
\newtheorem{theorem}{Theorem}
\newtheorem{lemma}[theorem]{Lemma}
\newtheorem{proposition}[theorem]{Proposition}
\newtheorem{corollary}[theorem]{Corollary}
\newtheorem*{pbA}{Problem A}
\newtheorem*{pbA*}{Problem A*}
\newtheorem{definition}[theorem]{Definition}
\newtheorem*{def0}{Definition 0}
\theoremstyle{definition}
\newtheorem{remark}[theorem]{Remark}
\newcommand{\beq}{\begin{equation}}
\newcommand{\eeq}{\end{equation}}
\DeclareMathOperator{\Div}{Div}
\DeclareMathOperator{\Supp}{Supp}
\DeclareMathOperator{\car}{char}
\renewcommand{\div}{\textrm{div}\,}
\newcommand{\Fq}{{\mathbb{F}_q}}
\newcommand{\Z}{\mathbb{Z}}
\newcommand{\F}{\mathbb{F}}
\newcommand{\longto}{\longrightarrow}
\newcommand{\tend}{\underset{i\to\infty}{\longto}}
\newcommand{\cL}{\mathcal{L}}
\newcommand{\cO}{\mathcal{O}}
\newcommand{\cS}{\mathcal{S}}
\begin{document}

\title{$(2,1)$-separating systems beyond the probabilistic bound}


\author{Hugues Randriambololona}

\maketitle

\section{Introduction}
\label{Intro}

One of the most powerful tools to derive lower bounds in extremal
combinatorics is the so called \emph{probabilistic method} \cite{AlonSp}.
Roughly speaking, to prove the existence of an object of a given size
satisfying certain conditions, one shows that a random object of this
size (maybe after being slightly modified) has a positive probability
to satisfy these conditions.

In many problems the lower bound given by this method is conjectured
exact, at least asymptotically,
and sometimes one can prove it is indeed so.
This means that optimal solutions to such problems are rather common.
On the other hand, when the probabilistic lower bound is not
asymptotically exact, optimal solutions tend to be rare and have
some particular structure. So, from a theoretical point of view,
it is of great importance to know whether a problem belongs to one
or the other of these two classes.


The problem we will be dealing with in this paper
is that of $(2,1)$-separation.
As can be seen from \cite{SagaChili}, this problem, and more generally
the theory of separating systems, has a quite long history.
While its origins could be arguably traced back to \cite{Renyi},
its first appearance, in the precise form we will be interested in,
can be found in \cite{FGU69}, motivated by a
problem in electrical engineering.
In fact the notion of separation there defined
is very natural and ubiquitous,
and several authors have introduced and studied equivalent versions,
sometimes independently,
in various contexts and in various languages
(e.g. frameproof codes, intersecting codes,
covering arrays, hash families...).
We point out the following two elegant formulations which can be
found in \cite{Korner}, the first being in terms of information theory
(dealing with binary sequences), the second in terms of extremal
combinatorics (dealing with set systems):

\begin{pbA}
How many different points can one find in the $n$-dimensional binary Hamming
space so that no three of them are on a line?
\end{pbA}

\noindent (We say three points in a metric space are on a line
if they satisfy
the triangle inequality with equality.)

\begin{pbA*}
How many different subsets can one find in an $n$-set
so that no three $A,B,C$ of them satisfy
$A\cap B\subset C\subset A\cup B$~?
\end{pbA*}

The equivalence between these two formulations is seen by identifying
each binary sequence with its support set.

We will take the condition in Problem~A as the definition of a
$(2,1)$-separating system. Of course
it can be extended to larger alphabets. We will only use the case where
the alphabet is a (finite) field $K$:
\begin{def0}
A $(2,1)$-separating code over $K$ of length $n$ is a subset $C\subset K^n$
such that any pairwise distinct $x,y,z\in C$ satisfy
\beq
\label{triangle_strict}
d(x,z)<d(x,y)+d(y,z)
\eeq
where $d$ is the Hamming distance in $K^n$.
\end{def0}

Condition~\eqref{triangle_strict} can be rephrased as saying that
there is a coordinate $i$ such that $y_i\not\in\{x_i,z_i\}$,
or equivalently, $(x_i-y_i)(z_i-y_i)\neq0$ in $K$.
If moreover $C$ is linear, this says in turn that any
two non-zero codewords have intersecting supports.
Such a code is then called a \emph{linear intersecting code}
\cite{LW}\cite{Miklos}\cite{CLemp}.

\medskip

There are higher notions of $(s,t)$-separating codes, and several
other variants; see the survey \cite{SagaChili}
(of notable interest in the literature, one also finds the terminology
$s$-frameproof for $(s,1)$-separating codes,
and $s$-secure-frameproof for $(s,s)$-separating codes
\cite{BS}\cite{STW}).
These can be defined either in terms of coordinates, or in terms of
metric convexity and the Hahn-Banach property, generalizing
\eqref{triangle_strict}. For a further discussion of these ideas,
and the analogy between Hamming and Euclidean spaces in this
regard, see \cite{21sep_prelim}. 
We will not need this material here, and focus on our main topic
which is as follows.

\medskip

Denote by $M(n)$ the common solution to
Problems A and A*,
and define its asymptotic exponent
\beq
\rho=\limsup_{n\to\infty}\frac{\log_2M(n)}{n}.
\eeq
It is shown in \cite{Korner} that $\rho$ satisfies the inequalities
\beq
\label{2bounds}
1-\frac{1}{2}\log_23\leq\rho\leq\frac{1}{2}
\eeq
where the derivation of the lower bound
\beq
\label{0.207518}
1-\frac{1}{2}\log_23\approx 0.207518\dots
\eeq
is a typical example of use of the probabilistic method
(it also follows from the earlier works
\cite{Saga}\cite{Komlos}\cite{Miklos}\cite{CLemp},
some of them
of a more coding-theoretic nature, but still non-constructive).

The reader certainly noticed there is plenty of space
for improvement between the two bounds in \eqref{2bounds}, and indeed
the main aim of this paper will be to reduce this gap,
although by an admittedly modest quantity:

\begin{theorem}
\label{th_0.207565}
The asymptotic exponent $\rho$
satisfies the lower bound
\beq
\label{0.207565}
\rho\geq\frac{3}{50}\log_211\approx 0.207565\dots
\eeq
\end{theorem}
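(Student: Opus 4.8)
The plan is to beat the probabilistic bound $1-\frac12\log_23$ by an explicit (or semi-explicit) construction, using the standard fact that a \emph{linear} intersecting code over $\F_q$ gives a $(2,1)$-separating system, and that the asymptotic exponent $\rho$ is bounded below by $\frac{R\log_2 q}{1}$ for any family of rate-$R$ linear intersecting codes over $\F_q$ — more precisely, concatenating with a good fixed-length inner intersecting code over a small field transports a $q$-ary bound to a binary one with the right normalization. The target exponent $\frac{3}{50}\log_2 11$ strongly suggests the relevant field is $\F_{11}$, with an inner code that packs $q=11$ symbols into blocks whose length/rate bookkeeping produces the factor $\frac{3}{50}$: presumably one takes a single fixed $\F_{11}$-linear intersecting code $C_0\subset\F_{11}^{n_0}$ of dimension $k_0$ realizing some ratio $k_0\log_2 11 / n_0$ slightly above $0.207518$, and then checks that concatenation of a long algebraic-geometry (or Reed–Solomon/BCH-type) intersecting code over $\F_{11}$ with $C_0$ remains intersecting while the rates multiply correctly. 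So the real content is: (i) an intersecting-code criterion one can verify for the inner code; (ii) a preservation lemma for concatenation; (iii) an asymptotically good outer family over $\F_{11}$.

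Concretely, I would first record the reduction: if $C\subset\F_q^n$ is $\F_q$-linear and every nonzero codeword has Hamming weight $> (1-\tfrac1q)n$ — equivalently two nonzero codewords always have overlapping supports, which for linear codes is automatic once each nonzero word misses at most a $1/q$ fraction of coordinates by a counting/averaging argument — then $C$ is intersecting, hence $(2,1)$-separating; and intersecting-ness is preserved under concatenation with any inner intersecting code. This gives $\rho \ge (\log_2 q)\cdot R^\ast(q)$ where $R^\ast(q)$ is the best asymptotic rate of $q$-ary intersecting codes, times the inner-code efficiency factor. Second, I would produce the outer family: over $\F_{11}$, use an explicit tower of function fields attaining the Drinfeld–Vlăduţ-type bound, take the associated AG codes, and show (via the Weil bound on the number of rational points, comparing $\deg G$ against $n$) that for a suitable choice of divisor degree every nonzero codeword has relative weight bounded away from $1-\tfrac1{11}$ on the correct side, so the code is intersecting with rate converging to an explicit value. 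Third, optimize the free parameters — the degree of the evaluation divisor, the genus-to-length ratio, and the choice of inner code $C_0$ — to land exactly on $\frac{3}{50}\log_2 11$.

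The main obstacle I expect is the quantitative optimization in step three: the AG intersecting condition forces the relative minimum distance above $1-\tfrac1q$, which is a fairly stringent requirement, so the outer rate one can guarantee is small, and one must be careful that the product of (outer rate)$\times$(inner efficiency)$\times\log_2 q$ genuinely exceeds $1-\tfrac12\log_2 3$ rather than merely matching it. This is where the specific number $11$ presumably wins over $2$, $3$, $4$, $\dots$: one has to run the comparison for several small prime powers $q$, using the best known bounds on $q$-ary intersecting codes (or on $A_q(n,d)$ with $d/n > 1-1/q$), and check that $q=11$ with a cleverly chosen short inner code gives the stated bound; getting the inner code to contribute exactly the factor $\tfrac{3}{50}/(\text{something clean})$ — likely an inner $[n_0,k_0]_{11}$ intersecting code with small $n_0$ found by an exhaustive or algebraic search — is the delicate computational heart of the argument. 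A secondary subtlety is verifying the concatenation-preserves-intersecting lemma in the exact regime needed (it requires the inner code to be intersecting and the outer code to have no zero coordinate positions after evaluation, which the AG/Weil estimates must also certify).
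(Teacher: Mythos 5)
Your proposal correctly identifies the high-level framework (concatenation of an AG outer intersecting code with a short inner $(2,1)$-separating code, with rates multiplying), but nearly all the concrete details are off, and more importantly you miss the actual new idea that makes the bound work.

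First, the numerology: the factor $\tfrac{3}{50}\log_2 11$ does not come from working over $\F_{11}$. The paper's outer code lives over $\F_{121}=\F_{11^2}$ (one needs $q$ a square to attain $A(q)=q^{1/2}-1$, and $11$ is not a square). The inner code is not an $\F_{11}$-linear code at all; it is a \emph{binary, nonlinear} code: any $121$-element subcode of the one-shortened Nordstrom--Robinson code of length $15$, whose only nonzero distances are $6,8,10$, so the strict triangle inequality \eqref{triangle_strict} holds directly. The concatenated rate is then $\tfrac{\log_2 121}{15}\,R_{121}=\tfrac{2\log_2 11}{15}\cdot\tfrac{9}{20}=\tfrac{3}{50}\log_2 11$. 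Your setup, with an $\F_{11}$-linear inner code and an $\F_{11}$-linear outer AG code, neither fits these numbers nor respects the need for $q$ to be a square.

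Second and more seriously, the intersecting-code criterion you invoke cannot deliver the theorem. The correct elementary criterion is relative minimum distance $>\tfrac12$ (not $>1-\tfrac1q$, which is strictly stronger for $q>2$ and would be worse). That criterion together with Tsfasman--Vl\u{a}du\c{t}--Zink gives only $R_q\geq\tfrac12-\tfrac1{A(q)}$, hence $R_{121}\geq0.4$ and $\rho\geq0.1845$, \emph{below} the probabilistic bound $0.2075$. Even Xing's stronger non-constructive bound gives only $\rho\geq0.2009$. The whole point of the paper is Theorem~2: $R_q\geq\tfrac12-\tfrac1{2A(q)}$ (so $R_{121}\geq\tfrac9{20}$), proved not via minimum distance but via Xing's algebraic criterion $l(2D-G)=0$ together with a new lemma (Lemma~\ref{l(A+2P)=l(A)}) showing that for a divisor $A$ of small degree with $l(A)=0$ one has $l(A+2P)=0$ for all but at most $4g$ points $P$, proved by a degree count on the differential $df$. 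This is the missing idea in your proposal, and without it the construction does not beat the probabilistic bound.
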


As we will see, the proof of this theorem is fully constructive.
And, however small the improvement
from \eqref{0.207518} to \eqref{0.207565}
might be, it is positive enough to ensure this new construction
stands beyond the probabilistic bound.
In fact, from the author's viewpoint, the tininess of this improvement
makes it even nicer, since it results from an almost miraculous
numerical coincidence.
Do Mathematics have a sense of humour?

\medskip

Our construction improves on the one of \cite{CS}, section 7.2,
while using the same \emph{concatenation} argument
(indeed it is easily
seen from \eqref{triangle_strict} that concatenating two
$(2,1)$-separating codes gives a $(2,1)$-separating code again).
The codes to be concatenated are chosen as follows:
\begin{itemize}
\item
The outer code is a linear intersecting
(hence $(2,1)$-separating) algebraic geometry code over $\F_{121}$.
\item
The inner code is any subcode of size $M=121$
out of the $128$ codewords of the binary non-linear
\emph{one-shortened Nordstrom-Robinson code} of length $n=15$.
Remark the only possible distances in this code are $0$, $6$, $8$,
and $10$, so it satisfies~\eqref{triangle_strict}.
\end{itemize}
Expressing the rate of the concatenated code as the product of the
inner and outer rates
then gives the lower bound
\beq
\label{concat121}
\rho\geq\frac{\log_2121}{15}R_{121}
\eeq
where $R_q$ denotes the asymptotic maximal achievable rate
for linear intersecting codes
over $\Fq$.

\medskip

This choice of parameters,
and especially the choice of $q=121$,
is the result of a certain trade-off that can
only be justified a posteriori.
Known lower bounds on $R_q$ get better as $q$ grows.
But on the other side, 
as in the binary case, we have the upper
bound $R_q\leq\frac{1}{2}$ (see e.g. \cite{Blackburn}),
and when $q$ and the length of the inner
code grow, the rate of the inner code becomes
limited by the asymptotic exponent $\rho$. Thus there is no hope
to get a construction of rate significantly better
than $\frac{\rho}{2}$ by this concatenation argument
if $q$ is taken too large.
In fact, candidates for the inner code have higher rate
for smaller lengths, which implies to keep $q$ of moderate size.

It turns out that the Nordstrom-Robinson code is a $(2,1)$-separating
code with exceptionally high rate
considering its length. It is the first
of a sequence of Kerdock codes, that can be shown $(2,1)$-separating
by the very same method \cite{KrSa}, but whose parameters are of lesser
interest for concatenation.
Remark also that, to use the full power of algebraic geometry codes,
we need $q$ to be a square. By luck, our square $121$ is quite close
to $128$, hence restricting the Nordstrom-Robinson code to a $121$-subcode
has only marginal impact on the rate.

\medskip

In \cite{CS}
the authors remark that a linear code of relative minimum distance larger
than one-half is intersecting. Combined
with the Tsfasman-Vladut-Zink bound \cite{TVZ}
this gives
\beq
\label{TVZb}
R_q\geq\frac{1}{2}-\frac{1}{A(q)}
\eeq
where $A(q)=q^{1/2}-1$ if $q$ is a square.
Hence $R_{121}\geq 0.4$ and
$\rho\geq 0.184503$
which was the best constructive lower bound up to now.

In \cite{Xing2002} Xing gives a new criterion for an AG code to be
intersecting, that does not rely on the minimum distance of the code.
From this criterion and a (non-constructive) counting argument in the Jacobian
of the curve he deduces
\beq
\label{Xb}
R_q\geq\frac{1}{2}-\frac{1}{A(q)}+\frac{1-2\log_q(2)}{2A(q)}
\eeq
hence $R_{121}\geq 0.435546$ and 
$\rho\geq 0.200877$
which is still below \eqref{0.207518}.
However we will improve on
Xing's bound \eqref{Xb} as follows:
\begin{theorem}
Let $q$ be a prime power with $A(q)>4$.
Then the asymptotic maximal achievable rate for linear intersecting
codes over $\Fq$ satisfies
\beq
\label{HRb}
R_q\geq\frac{1}{2}-\frac{1}{2A(q)}.
\eeq
Moreover if $q\geq 25$ is a square, then $R_q\geq\frac{1}{2}-\frac{1}{2(q^{1/2}-1)}$.
\end{theorem}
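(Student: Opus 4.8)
The plan is to prove \eqref{HRb} by a direct algebraic--geometry construction, built around a criterion for a code to be intersecting that is phrased entirely in terms of one divisor class — so that no union bound over the exponentially many subsets of coordinates is ever needed; this is what should let the construction beat \eqref{Xb}. Let $X$ be a smooth projective curve over $\Fq$ of genus $g$, let $P_1,\dots,P_N\in X(\Fq)$ be distinct, put $D=P_1+\dots+P_N$, and let $G$ be a divisor of degree $m<N$ with support disjoint from the $P_i$. The code $C=C_L(D,G)=\{\eval(f):f\in L(G)\}$ has dimension $\ell(G)\ge m-g+1$, and I claim it is intersecting as soon as $\ell(2G-D)=0$: if $\eval(f)$ and $\eval(f')$ were nonzero codewords with disjoint supports then $f(P_i)=0$ or $f'(P_i)=0$ for every $i$, hence $\div(ff')+2G\ge D$, i.e.\ $ff'\in L(2G-D)\setminus\{0\}$, a contradiction. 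Note that $\ell(2G-D)=0$ forces $\deg(2G-D)\le g-1$.

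Now let $X$ run through a tower over $\Fq$ with $g\to\infty$ and $N/g\to A(q)$ — for $q$ a square, a tower as in \cite{TVZ}, for which $A(q)=q^{1/2}-1$ — and set $m=\bigl\lfloor\tfrac{N+g-1}{2}\bigr\rfloor$, so that $e:=2m-N\le g-1$. Then $\dim C\ge m-g+1\ge\tfrac{N-g}{2}$, whence $R\ge\tfrac{m-g+1}{N}\to\tfrac12-\tfrac1{2A(q)}$, which is exactly \eqref{HRb}. So the theorem reduces to: each such $X$ carries a divisor $G$ of degree $m$ with $\ell(2G-D)=0$. The hypothesis $A(q)>4$ (resp.\ $q\ge25$ a square, so $A(q)=q^{1/2}-1\ge4$) serves to make $N/g>4$, hence $m\ge2g-1$, and to provide the arithmetic slack needed below.

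For the existence of $G$ I would count in the Jacobian. Write $J=\mathrm{Pic}^0(X)(\Fq)$, $h=\#J$, and let $W_e\subset\mathrm{Pic}^e(\Fq)$ be the set of classes of effective divisors of degree $e$. The ``bad'' classes $[G]\in\mathrm{Pic}^m(\Fq)$, those with $\ell(2G-D)>0$, are precisely the $[G]$ with $2[G]\in[D]+W_e$; since $[G]\mapsto2[G]$ has all fibres of size $|J[2]|$, there are at most $|J[2]|\cdot\#W_e$ of them, and a good $G$ exists as soon as $|J[2]|\cdot\#W_e<h$. Bounding $\#W_e$ by the number of effective divisors of degree $e$ via the zeta function of $X$, one finds that this inequality, with $e$ pushed up to $g-1$, is \emph{just barely false}: the factor $|J[2]|$ — as large as $4^{\,g}$ — together with the slack in the a priori bound $h\ge(q^{1/2}-1)^{2g}$ absorb exactly the quantity $\tfrac{\log_q2}{A(q)}$ by which \eqref{HRb} improves on \eqref{Xb}. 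This balancing act is, I expect, the heart of the proof.

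To recover that loss I would refine the count in two ways. First, when $q$ is even one can look for a tower whose curves have $2$-rank $0$; then multiplication by $2$ is bijective on $J$, so $2[G]-[D]$ runs over all of $\mathrm{Pic}^e(\Fq)$ as $[G]$ varies and in particular hits a non-effective class (since $e\le g-1$), finishing the argument without any slack at all. Second, and for general $q$, one should use the freedom in choosing which $N$ rational points serve as $D$: this moves $[D]$ through the subgroup $\Gamma\le J$ generated by differences of rational points, so it is enough that the coset $2J+\Gamma$ not sit inside a translate of $W_e$ — and for a curve with many rational points $\Gamma$ has small index in $J$. Feeding in that along a tower attaining $A(q)$ the Frobenius eigenvalues are strongly biased towards $-q^{1/2}$, which forces $h$ well above $(q^{1/2}-1)^{2g}$ while the attendant cancellations keep $\#W_e$ small, one arrives at a numerical inequality that holds — with room to spare — precisely when $A(q)>4$; for $q$ a square the explicitness of the towers and of their class numbers lets one run the estimate down to the boundary $A(q)=q^{1/2}-1\ge4$, i.e.\ $q\ge25$. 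It is the razor-thin margin in this last inequality that is responsible both for the hypothesis $A(q)>4$ and for the ``almost miraculous coincidence'' mentioned in the introduction.
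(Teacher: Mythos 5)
Your reduction of Theorem~2 to the existence, on each curve, of a divisor $G$ of degree $m=\lfloor(N+g-1)/2\rfloor$ with $\ell(2G-D)=0$ is exactly the reduction the paper makes (modulo a swap of the letters $D$ and $G$), and the rate computation is correct. But the existence argument you propose is genuinely different from the paper's, and as written it has real gaps.

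The paper does \emph{not} count in the Jacobian. It builds the divisor iteratively: starting from $D_0$ of degree $\lfloor(N-1)/2\rfloor$ (so $\deg(2D_0-D)<0$), it repeatedly adds one rational point $P$ to the current $D_i$, using a new Riemann--Roch-type lemma (the paper's Lemma~\ref{l(A+2P)=l(A)}) which says: if $\deg A\le g-3$ and $\ell(A)=0$, then $\ell(A+2P)=0$ for all but at most $4g$ points $P\in X(K)$. The lemma is proved by a degree count on the differential $df$ for $\{1,f\}$ a basis of $\cL(\Omega-A)$ --- a Wronskian/Pl\"ucker argument in disguise. This is where the constant $4$ in the hypothesis $A(q)>4$ comes from: one needs $|X(\Fq)|>4g$ so that at each of the roughly $g/2$ steps a valid $P$ survives. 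The resulting construction is fully constructive and polynomial-time given explicit optimal towers, which the paper emphasises as a selling point.

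Your route instead pushes Xing's original Jacobian-counting argument further. You correctly identify that the naive bound fails because of the $|J[2]|$ factor (up to $4^g$), but the two recovery mechanisms you sketch do not close the gap. The $2$-rank-$0$ device only addresses even $q$, and you would still need an \emph{optimal} tower with that extra property, which you do not produce. For general $q$, varying $D$ over different subsets of rational points does not move $[D]$ over a full $\Gamma$-coset (the possible $[D]$ form a combinatorially constrained subset, and asymptotically you want $N\approx|X(\Fq)|$, which leaves almost no slack), and the assertion that ``$2J+\Gamma$ not contained in a translate of $W_e$'' suffices is not justified. Finally, the decisive numerical inequality --- that the bias of Frobenius eigenvalues towards $-q^{1/2}$ inflates $h$ and deflates $\#W_e$ by just enough to absorb $|J[2]|$ --- is stated as an expectation, not verified; and even if it held, the argument would remain non-constructive, whereas a key point of Theorem~2 (needed for the concatenation in Theorem~1 to ``stand beyond the probabilistic bound'' in a meaningful constructive sense) is that it is not. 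So the proposal identifies the right target and the right starting criterion, but the heart of the proof --- Lemma~\ref{l(A+2P)=l(A)} and the one-point-at-a-time construction of Proposition~\ref{Main_Prop} --- is missing and is not replaced by a complete alternative.
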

This new bound was first conjectured in \cite{ITW2010} and,
as noted there, it implies Theorem~1.
Indeed, it gives
$R_{121}\geq\frac{9}{20}=0.45$, hence
combined with \eqref{concat121}:
\beq
\label{re0.207565}
\rho\geq\frac{\log_2121}{15}\frac{9}{20}=\frac{3}{50}\log_211>0.207565.
\eeq
Thus the rest of this paper will be devoted to the proof of Theorem~2.
We will do so by giving an effectively constructible family of
linear intersecting codes attaining
\eqref{HRb} --- at least, provided an effectively constructible
family of curves attaining $A(q)$ is known, which will be true in our case
of interest.

\section{Algebraic geometry codes and the intersecting support property}
\label{Xingsection}

Here we recall some material from \cite{Xing2002}, and start to
develop from it.

Let $K$ be a field (in the next section we will also suppose $K$ perfect,
and actually the reader may assume $K$ is a finite field).

If $X$ is an algebraic curve
(a smooth, projective, absolutely irreducible $1$-dimensional scheme)
over $K$, of genus $g$,
and $D$ is a divisor on $X$
(in this text ``divisor'' will always mean
``$K$-rational divisor''; likewise, ``points'' will be ``$K$-points'', etc.)
we denote by
$\cL(D)=\Gamma(X,\cO(D))$ its space of global sections,
and $l(D)=\dim_{K}\cL(D)$ the dimension of the latter.

Recall $X$ admits a so called \emph{canonical divisor} $\Omega$,
which may be taken to be the divisor of any (rational)
differential form on $X$. It has degree $\deg(\Omega)=2g-2$
and dimension $l(\Omega)=g$.
The Riemann-Roch theorem asserts that
\beq
l(D)=\deg(D)+1-g+l(\Omega-D).
\eeq
In particular $l(D)\geq\deg(D)+1-g$, with equality when $\deg(D)\geq 2g-1$.

\medskip

Suppose given 
a divisor $G$ on $X$ that can be written
as a sum of distinct ($K$-)points,
each with multiplicity $1$.
Let $n=\deg(G)\leq|X(K)|$ be its degree,
and choose an ordering $P_1,\dots,P_n$ of the points in its
support, so
\beq
G=P_1+\cdots+P_n.
\eeq
Also, for each $i$, choose a local parameter $t_i$ at $P_i$.
Then, if $D$ is any divisor on $X$, the section
$t_i^{-v_{P_i}(D)}$ is a trivialization for $\cO(D)$ at $P_i$.
Restricting to the fiber, this trivialization then gives
an identification $\cO(D)|_{P_i}\simeq K$.
Now combining these identifications with the natural restriction map
\beq
\cL(D)\longto\bigoplus_{i=1}^n\cO(D)|_{P_i}
\eeq
leads to the following:

\begin{definition}
\label{defGoppa}
For any divisor $D$ on $X$,
the generalized Goppa evaluation code $C(G,D)$ is the image
of the morphism
\beq
\begin{array}{cccc}
\phi_{G,D}: & \cL(D) & \longto & K^n \\
& f & \mapsto & ((t_1^{v_1}f)(P_1),\dots,(t_n^{v_n}f)(P_n))
\end{array}
\eeq
where for each $i$ we let $v_i=v_{P_i}(D)$.
\end{definition}

The kernel of $\phi_{G,D}$ is $\cL(D-G)$. Hence, if $l(D-G)=0$,
then $\dim C(G,D)=l(D)$. This occurs for example when $\deg(D)<n$.

As noted by Xing, this construction generalizes Goppa's evaluation
codes, while allowing the supports of $G$ and $D$ to overlap
(in fact Xing's original definition also asked $D$ to be positive,
but this condition is clearly unnecessary).

A virtue of this description is
that the ordering of the $P_i$ and the choice of the $t_i$
are made once and for all, independently of $D$.
This gives some coherence in the choice of our identifications
of the fibers $\cO(D)|_{P_i}\simeq K$ as $D$ varies,
which in turn makes the system of our evaluation maps $\phi_{G,D}$
compatible, in the sense that, given two divisors $D$ and $D'$,
the following diagram is commutative:
\beq
\begin{CD}
\cL(D)\times\cL(D') @>\phi_{G,D}\times\phi_{G,D'}>> K^n\times K^n\\
@VVV @VVV\\
\cL(D+D') @>\phi_{G,D+D'}>> K^n
\end{CD}
\eeq
where the first vertical map is multiplication in
the function field $K(X)$, and the
second vertical map is termwise multiplication in $K^n$.
Indeed, both paths in the diagram send $(f,f')\in\cL(D)\times\cL(D')$
to $((t_1^{v_1+v_1'}ff')(P_1),\dots,(t_n^{v_n+v_n'}ff')(P_n))$ in $K^n$.

Said otherwise, the collection of maps $\phi_{G,D}$
define a morphism of \emph{$K$-algebras}
\beq
\phi_G:\bigoplus_{D\in\Div(X)}\cL(D)\longto K^n
\eeq
where the multiplication law in $K^n$ is termwise multiplication.

We now recall:

\begin{theorem}[Xing's criterion, \cite{Xing2002} Th.~3.5, with $s=2$]
\label{critere}
With the preceding notations, suppose $\deg(D)<n$ and
\beq
l(2D-G)=0.
\eeq
Then $C(G,D)$ has dimension $l(D)$ and is a linear (self-)intersecting code.
\end{theorem}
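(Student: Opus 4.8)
The plan is to leverage the $K$-algebra structure on the system of evaluation maps that has just been set up. The dimension statement needs nothing beyond $\deg(D)<n$: the kernel of $\phi_{G,D}$ is $\cL(D-G)$, and since $\deg(D-G)=\deg(D)-n<0$ this space vanishes, so $\phi_{G,D}$ is injective and $\dim C(G,D)=l(D)$. For the intersecting property I would argue by contradiction. Suppose $x,y\in C(G,D)$ are both nonzero and have disjoint supports, i.e.\ their termwise product $x\cdot y$ equals $0$ in $K^n$. Write $x=\phi_{G,D}(f)$ and $y=\phi_{G,D}(g)$ with $f,g\in\cL(D)$; by the injectivity just noted, $f$ and $g$ are nonzero elements of the function field $K(X)$.

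Next I would invoke the compatibility of the evaluation maps --- the commutative diagram preceding the statement, applied with $D'=D$ --- to obtain
\[
\phi_{G,2D}(fg)=\phi_{G,D}(f)\cdot\phi_{G,D}(g)=x\cdot y=0 .
\]
Since $fg\in\cL(2D)$ (indeed $\div(fg)=\div f+\div g\geq-2D$) and $\ker\phi_{G,2D}=\cL(2D-G)$, this shows $fg\in\cL(2D-G)$. But $K(X)$ is a field, so $fg\neq0$; we have thus produced a nonzero global section of $\cL(2D-G)$, contradicting the hypothesis $l(2D-G)=0$. Hence no two nonzero codewords of $C(G,D)$ can have disjoint supports, which is precisely the statement that $C(G,D)$ is a linear intersecting code (the parenthetical ``self-'' simply records that the conclusion also covers two equal codewords, a case which is vacuous since a nonzero vector has nonempty support).

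I do not expect a genuine obstacle here: essentially all the content is already packaged in the algebraic setup that precedes the theorem. The one step that deserves care is the passage from ``disjoint supports'' to ``$fg$ is annihilated by an evaluation map'': it is essential that the Hadamard product of two elements of $C(G,D)$ again lies in the image of a \emph{single} evaluation map, namely $\phi_{G,2D}$ --- which is exactly what the coherent, $D$-independent choice of the local parameters $t_i$ buys us --- and equally essential that the function field is an integral domain, so that $fg$ stays nonzero. The hypothesis $l(2D-G)=0$ is then precisely the extra input needed to exclude such a product $fg$.
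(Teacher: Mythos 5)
Your proof is correct and follows essentially the same route as the paper: the paper derives Theorem~\ref{critere} as the special case $D=D'$ of Proposition~\ref{Xinggen}, whose proof is exactly your argument — write $cc'=\phi_{G,D+D'}(ff')$, use $\cL(D+D'-G)=\{0\}$, and conclude $f=0$ or $f'=0$ since $K(X)$ is a domain. The dimension claim via $\deg(D)<n\Rightarrow\cL(D-G)=0$ is also stated just before the theorem.
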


In fact it is possible to say slightly more.

Two linear codes $C,C'\subset K^n$ are said mutually intersecting
if any non-zero codewords $c\in C$ and $c'\in C'$
have intersecting supports. Then:

\begin{proposition}
\label{Xinggen}
Suppose $D,D'$ are divisors on $X$ with $\deg(D)<n$, $\deg(D')<n$, and
\beq
l(D+D'-G)=0.
\eeq
Then $C=C(G,D)$ and $C'=C(G,D')$ have dimension $l(D)\geq\deg(D)+1-g$ and
$l(D')\geq\deg(D')+1-g$
respectively, and are mutually intersecting.
\end{proposition}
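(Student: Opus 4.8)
The plan is to mimic the proof of Xing's criterion (Theorem~\ref{critere}), which is the special case $D=D'$, but to track the two divisors separately. First I would establish the dimension statements: since $\deg(D)<n$ we have $l(D-G)=0$ (a nonzero section of $D-G$ would have degree $\geq 0$ but $\deg(D-G)=\deg(D)-n<0$), so $\phi_{G,D}$ is injective and $\dim C(G,D)=l(D)$, and then $l(D)\geq\deg(D)+1-g$ is just Riemann-Roch; symmetrically for $D'$. Note also that the hypothesis $l(D+D'-G)=0$ automatically forces $\deg(D+D'-G)<2g-1$, but more to the point it is the exact condition we need below; and it is implied by either $\deg(D)<n$ or $\deg(D')<n$ being strengthened, but in general it is a genuine extra assumption.

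The heart of the argument is the following. Suppose for contradiction that there exist nonzero $c\in C$ and $c'\in C'$ with disjoint supports, say $c=\phi_{G,D}(f)$ and $c'=\phi_{G,D'}(f')$ for some $f\in\cL(D)$, $f'\in\cL(D')$ (both nonzero, by the injectivity just established). Disjoint supports means that for every index $i$, either $(t_i^{v_i}f)(P_i)=0$ or $(t_i^{v_i'}f')(P_i)=0$. Using the compatibility diagram, the product $ff'$ lies in $\cL(D+D')$ and $\phi_{G,D+D'}(ff')$ is the termwise product $c\cdot c'$, which is therefore the zero vector: every coordinate $(t_i^{v_i+v_i'}ff')(P_i)$ vanishes. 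This says precisely that $ff'\in\ker\phi_{G,D+D'}=\cL(D+D'-G)$. But $f\neq 0$ and $f'\neq 0$ in the function field $K(X)$, hence $ff'\neq 0$, contradicting $l(D+D'-G)=0$. Therefore no such pair exists, i.e.\ $C$ and $C'$ are mutually intersecting.

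The only genuinely delicate point is making sure the passage ``disjoint supports of $c,c'$ $\Rightarrow$ $c\cdot c'=0$ $\Rightarrow$ $ff'\in\cL(D+D'-G)$'' is airtight: it rests on the commutative diagram displayed before Theorem~\ref{critere}, which identifies $\phi_{G,D+D'}(ff')$ with the coordinatewise product of $\phi_{G,D}(f)$ and $\phi_{G,D'}(f')$, together with the already-recorded fact that $\ker\phi_{G,D+D'}=\cL(D+D'-(G))$. Since $\deg(D+D')$ may well be $\geq n$, we cannot invoke injectivity of $\phi_{G,D+D'}$ --- but we do not need to; we only need to identify its kernel, which the construction gives us for free. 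I would also remark, as in \cite{Xing2002}, that applying the proposition with $D=D'$ recovers Theorem~\ref{critere}, since a linear code is (self-)intersecting exactly when it is mutually intersecting with itself, and then $\cL(2D-G)=0$ additionally gives $c\neq c'\Rightarrow$ the supports meet, i.e.\ the genuine $(2,1)$-separating property for the associated code.
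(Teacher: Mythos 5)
Your proof is correct and follows essentially the same route as the paper's: both identify the termwise product $cc'$ with $\phi_{G,D+D'}(ff')$ via the commutative diagram, use $\ker\phi_{G,D+D'}=\cL(D+D'-G)=\{0\}$ to force $ff'=0$ hence $f=0$ or $f'=0$, and invoke Riemann--Roch together with $l(D-G)=0$ for the dimension bound. The only stylistic difference is that you phrase the key step as a contradiction while the paper argues directly, and you lean on injectivity of $\phi_{G,D}$ to pass from $c\neq 0$ to $f\neq 0$ where in fact that implication is immediate from linearity (injectivity is only genuinely needed for $\dim C=l(D)$).
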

\begin{proof}
Let $c\in C$ and $c'\in C'$ and suppose the termwise product $cc'$
is zero in $K^n$. Write $c=\phi_{G,D}(f)$ and $c'=\phi_{G,D'}(f')$
for $f\in\cL(D)$ and $f'\in\cL(D')$.
Then $0=cc'=\phi_{G,D+D'}(ff')$
so $ff'\in\ker\phi_{G,D+D'}=\cL(D+D'-G)=\{0\}$, hence $f=0$ or $f'=0$,
that is $c=0$ or $c'=0$. This proves the intersection property,
and then the lower bound on the dimensions follows from Riemann-Roch.
\end{proof}

Remark that this proposition includes Theorem~\ref{critere} as a particular
case (namely when $D=D'$). In fact the proof given here is essentially the
same as Xing's.

We will use this variant of
Xing's criterion to give a lower bound on the rates of pairs of
mutually intersecting codes.
While easier, the
proof of this result will serve as a model for the proof of Theorem~2
in the last
section; and it is also certainly of independent interest.

\begin{lemma}
\label{l(A+P)=l(A)}
Let $X$ be a curve over $K$ of genus $g$,
and let $A$ be a divisor on $X$ with $\deg(A)\leq g-2$
and
\beq
l(A)=0.
\eeq
Then for all points $P\in X(K)$ except perhaps for at most $g$ of them,
we have
\beq
\label{l(A+P)=0}
l(A+P)=0.
\eeq
\end{lemma}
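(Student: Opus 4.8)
The plan is to recognise the exceptional points as the base points of a single fixed linear system, and then to bound their number by a degree count.

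First, since $\cL(A)$ has codimension at most $1$ in $\cL(A+P)$ (the quotient embeds into the one-dimensional fibre $\cO(A+P)|_P$), we get $l(A+P)\leq l(A)+1=1$ for every $P$, so it suffices to bound the number of $P$ with $l(A+P)=1$. Subtracting the Riemann--Roch identity for $A$, in which $l(A)=0$, from the one for $A+P$ yields
\beq
l(A+P)=1+l(\Omega-A-P)-l(\Omega-A),
\eeq
so $l(A+P)=1$ exactly when $l(\Omega-A-P)=l(\Omega-A)$, i.e.\ when every section of $\cL(\Omega-A)$ vanishes at $P$; in other words, exactly when $P$ is a base point of the complete linear system $|\Omega-A|$.

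Next I would observe that this system is nonempty: Riemann--Roch together with $l(A)=0$ gives $l(\Omega-A)=\deg(\Omega-A)+1-g=g-1-\deg(A)\geq 1$, using the hypothesis $\deg(A)\leq g-2$. Let $B\geq 0$ denote the base divisor of $|\Omega-A|$, that is, the largest effective divisor with $\cL(\Omega-A-B)=\cL(\Omega-A)$. Every exceptional point lies in $\Supp(B)$, and since each prime divisor occurs in $B$ with multiplicity at least $1$, the number of exceptional points is at most $\deg(B)$; so the claim reduces to the inequality $\deg(B)\leq g$.

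Finally, $|\Omega-A-B|$ is base-point free of the same (positive) dimension $l(\Omega-A)$; choosing a nonzero section exhibits an effective divisor of degree $\deg(\Omega-A-B)$ in its class, whence the elementary estimate $l(\Omega-A-B)\leq\deg(\Omega-A-B)+1$. Therefore
\beq
\deg(B)=\deg(\Omega-A)-\deg(\Omega-A-B)\leq\bigl(2g-2-\deg(A)\bigr)-\bigl(l(\Omega-A)-1\bigr)=g,
\eeq
using $l(\Omega-A)=g-1-\deg(A)$. This gives the bound. All the steps are routine; the one point deserving attention, and the only place where the sharp constant $g$ (rather than the weaker bound $2g-2-\deg(A)$ that the divisor of a single section of $\cL(\Omega-A)$ would give) is extracted, is the passage to the base-point-free model combined with $l(E)\leq\deg(E)+1$ for effective $E$.
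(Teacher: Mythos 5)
Your proof is correct, but it takes a genuinely different route from the paper's. The paper argues directly by contradiction: assuming $g+1$ exceptional points $P_1,\dots,P_{g+1}$, it forms $A'=A+P_1+\cdots+P_{g+1}$, shows the restriction map $\cL(A')\to\bigoplus_i\cL(A')/\cL(A'-P_i)$ is onto (each $f_i\in\cL(A+P_i)\setminus\cL(A)$ hits exactly the $i$-th summand), so $l(A')\geq g+1$, which is incompatible with $\deg(A')\leq 2g-1$. You instead dualize: from $l(A)=0$ and Riemann--Roch you get $l(A+P)=1+l(\Omega-A-P)-l(\Omega-A)$, so the exceptional points are precisely the base points of the nonempty system $|\Omega-A|$, and you bound the degree of the base divisor $B$ by applying the elementary estimate $l(E)\leq\deg(E)+1$ to $E=\Omega-A-B$ (legitimate since $l(\Omega-A-B)=l(\Omega-A)\geq1$). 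Both arguments ultimately rest on Riemann--Roch together with that elementary upper bound, but yours buys a bit more structure: the exceptional set is not merely of size $\leq g$, it is contained in the support of a single fixed effective divisor $B$ with $\deg B\leq g$. One small remark: base-point-freeness of $|\Omega-A-B|$ plays no role in the degree count; all you use is the equality $l(\Omega-A-B)=l(\Omega-A)$, which is built into the definition of $B$.
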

\begin{proof}[Proof (adapted from \cite{Stichtenoth}, ch.~I, claim (6.8))]
By contradiction, suppose there are $g+1$ distinct
points $P_1,\dots,P_{g+1}\in X(K)$
for which \eqref{l(A+P)=0} fails, hence for each $1\leq i\leq g+1$
we can find a function $f_i\in\cL(A+P_i)\setminus\cL(A)$.
Let also
\beq
A'=A+P_1+\cdots+P_{g+1}.
\eeq
Then we also have $f_i\in\cL(A')\setminus\cL(A'-P_i)$,
which means that the quotient space $\cL(A')/\cL(A'-P_i)$ has
dimension $1$ and admits $f_i$ as a generator. On the other hand,
for $j\neq i$, we have $f_i\in\cL(A'-P_j)$, hence $f_i$ maps
to $0$ in $\cL(A')/\cL(A'-P_j)$.
This implies that the natural map
\beq
\cL(A')\longto\bigoplus_{i=1}^{g+1}\cL(A')/\cL(A'-P_i)
\eeq
is onto, hence by the rank theorem,
\beq
l(A')\geq g+1.
\eeq
But at the same time
\beq
\deg(A')=\deg(A)+g+1\leq 2g-1
\eeq
which contradicts Riemann-Roch.
\end{proof}

\begin{proposition}
\label{constrmutint}
Suppose $n>g$.
Let $m$ be an integer with $g\leq m<n$
and let $D$ be a divisor on $X$ of degree $\deg(D)=m$.
Then there exists a divisor $D'$ on $X$, with
$\deg(D')=n+g-1-m$,
such that
$C=C(G,D)$ and $C'=C(G,D')$ 
have dimension
\beq
\dim C\geq m+1-g
\eeq
and
\beq
\dim C'\geq n-m
\eeq
respectively, and are mutually intersecting.
\end{proposition}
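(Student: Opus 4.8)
The plan is to reduce the statement to Proposition~\ref{Xinggen}: it is enough to exhibit a divisor $D'$ on $X$ of degree exactly $n+g-1-m$ with $l(D+D'-G)=0$. Once such a $D'$ is available, Proposition~\ref{Xinggen} immediately yields that $C=C(G,D)$ and $C'=C(G,D')$ are mutually intersecting (here $\deg D=m<n$, and $\deg D'=n+g-1-m<n$ because $m\ge g$), while the two dimension bounds are exactly the Riemann-Roch inequalities $\dim C\ge\deg D+1-g=m+1-g$ and $\dim C'\ge\deg D'+1-g=(n+g-1-m)+1-g=n-m$. So the whole content of the argument is the construction of $D'$.

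I would build $D'$ effectively, as a sum of $r:=n+g-1-m$ points of $X(K)$ chosen one after another so that the ``running divisor'' never acquires a global section. Set $A_0=D-G$; since $\deg A_0=m-n<0$ we have $l(A_0)=0$. Suppose $P_1,\dots,P_k$ have been chosen, with $0\le k<r$, so that $A_k:=D-G+P_1+\cdots+P_k$ satisfies $l(A_k)=0$. Because $k\le r-1$ we have $\deg A_k=m+k-n\le g-2$, so Lemma~\ref{l(A+P)=l(A)} applies and tells us that $l(A_k+P)=0$ for every $P\in X(K)$ outside a set of at most $g$ points. Since $n>g$ forces $|X(K)|\ge n>g$, such a good point exists; call it $P_{k+1}$, put $A_{k+1}=A_k+P_{k+1}$, and repeat. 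After $r$ steps, $D':=P_1+\cdots+P_r$ has degree $r=n+g-1-m$ and $l(D+D'-G)=l(A_r)=0$, as required; applying Proposition~\ref{Xinggen} to $D$ and this $D'$ finishes the proof.

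The point that needs care --- and the reason the construction outputs precisely degree $n+g-1-m$, hence the bound $\dim C'\ge n-m$ and nothing larger --- is that Lemma~\ref{l(A+P)=l(A)} may be invoked only for divisors of degree at most $g-2$, so the inductive process must halt as soon as the running divisor reaches degree $g-1$; this asymmetry between $D$ and $D'$ is exactly what prevents both codes from having rate near $1/2$ at the same time. The degenerate cases ($g\in\{0,1\}$, or $r=0$) cause no trouble, since then every running divisor that occurs already has negative degree and $l=0$ holds automatically, without any appeal to the Lemma. Finally, I note that this point-by-point construction, together with the degree bookkeeping, is the template that will reappear --- in a more elaborate form --- in the proof of Theorem~2.
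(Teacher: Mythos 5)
Your proof is correct and follows essentially the same iterative construction as the paper: reduce to finding $D'$ with $l(D+D'-G)=0$, then build $D'$ by repeatedly adding a rational point chosen via Lemma~\ref{l(A+P)=l(A)} so that $l$ stays zero, and finish with Proposition~\ref{Xinggen} and Riemann--Roch. The only (cosmetic) difference is that the paper seeds the iteration with an arbitrary divisor $D'_0$ of degree $n-1-m$ and then performs exactly $g$ lemma-steps, whereas you start from the zero divisor and perform $n+g-1-m$ steps, the first $n-1-m$ of which are vacuous since the running divisor still has negative degree.
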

\noindent (Note the hypothesis $n>g$ implies $|X(K)|>g$, hence in some way
``$X$ has many rational points''.)
\begin{proof}
For $0\leq i\leq g$ we construct divisors $D'_i$
such that
\beq
\label{conditions}
\deg(D'_i)=n+i-1-m\qquad\textrm{and}\qquad l(D+D'_i-G)=0
\eeq
iteratively
as follows:
\begin{itemize}
\item Start with any divisor $D'_0$ 
of degree $n-1-m$,
hence $\deg(D+D'_0-G)<0$ and $l(D+D'_0-G)=0$ as asked.
\item Suppose up to some $i<g$, we have a divisor $D'_i$
satisfying \eqref{conditions}.
The divisor $A=D+D'_i-G$ has then degree $\deg(A)=i-1$
and $l(A)=0$,
and since $|X(K)|>g$,
we can apply Lemma \ref{l(A+P)=l(A)}
to find $P$ such that $l(A+P)=0$.
Then we set $D'_{i+1}=D'_i+P$, so $D'_{i+1}$ satisfies \eqref{conditions}.
\item This ends when $i=g$, and we set $D'=D'_g$.
\end{itemize}
With this choice of $D'$, the conditions in Proposition~\ref{Xinggen}
are satisfied, hence $C$ and $C'$ are mutually intersecting.
Moreover, $\dim C'=l(D')\geq\deg(D')+1-g=n-m$ as claimed.
\end{proof}

Let $q$ be a prime power.
Say that a sequence of curves $X_i$ over the finite field $\Fq$
form an $\infty$-sequence if the genus $g_i$ 
of $X_i$ tends to infinity as
$i$ goes to infinity.

Let $A(q)$ be the \emph{largest} real number such that there exists
an $\infty$-sequence of curves $X_i$ over $\Fq$
with
\beq
\frac{|X_i(\Fq)|}{g_i}\tend A(q).
\eeq
An $\infty$-sequence of curves 
for which this limit is attained
is then said \emph{optimal}.

It is known \cite{DV} that $A(q)\leq q^{1/2}-1$,
with equality when $q$ is a square \cite{Ihara}\cite{TVZ}.

\begin{corollary}
Suppose $A(q)>1$.
Let $r$ and $r'$ be two positive real numbers such that
\beq
r+r'\leq 1-\frac{1}{A(q)}.
\eeq
Then there exists a sequence of pairs of mutually intersecting
codes $(C,C')$ over $\Fq$, of length going to infinity,
and of rates at least asymptotically $(r,r')$.
\end{corollary}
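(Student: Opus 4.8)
The plan is to feed an optimal $\infty$-sequence of curves into Proposition~\ref{constrmutint} and to choose the length parameter $n$ and the ``cut'' $m$ so that the two dimension bounds $\dim C\geq m+1-g$ and $\dim C'\geq n-m$ translate, after dividing by $n$, into asymptotic rates equal to $r$ and at least $r'$ respectively. The arithmetic heart of the matter is that such a simultaneous placement of $m$ is possible exactly when $\frac{g}{n}\leq 1-r-r'$, i.e. (since $\frac{g}{n}\to\frac{1}{A(q)}$ along an optimal sequence) exactly under the stated hypothesis.

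First I would fix, using the definition of $A(q)$ together with $A(q)>1$, an $\infty$-sequence of curves $X_i/\Fq$ of genus $g_i\to\infty$ with $N_i:=|X_i(\Fq)|$ satisfying $N_i/g_i\to A(q)$, equivalently $g_i/N_i\to 1/A(q)$. Since $A(q)>1$, we have $N_i>g_i$ for all large $i$, so after discarding finitely many indices I may assume $N_i>g_i$ throughout. For each such $i$ I set $n_i=N_i$ and take $G_i=P_1+\cdots+P_{n_i}$ to be the sum of all the rational points of $X_i$, so that the hypothesis $n>g$ of Proposition~\ref{constrmutint} holds.

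Next I choose $m_i=\lfloor r\,n_i\rfloor+g_i$. One must check $g_i\leq m_i<n_i$: the left inequality is obvious, and the right one is equivalent to $g_i<n_i-\lfloor r\,n_i\rfloor$, which holds for all large $i$ because $(n_i-\lfloor r\,n_i\rfloor)/n_i\to 1-r$ while $g_i/n_i\to 1/A(q)$, and $1/A(q)\leq 1-r-r'<1-r$ thanks to $r'>0$. So for all large $i$ the pair $(n_i,m_i)$ is admissible, and Proposition~\ref{constrmutint} applies with $X=X_i$, $G=G_i$, $m=m_i$, and $D=D_i$ any divisor of degree $m_i$ (say $D_i=m_iP_1$): it yields a divisor $D_i'$ and mutually intersecting codes $C_i=C(G_i,D_i)$ and $C_i'=C(G_i,D_i')$ with $\dim C_i\geq m_i+1-g_i$ and $\dim C_i'\geq n_i-m_i$. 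Computing rates: $\dim C_i/n_i\geq(\lfloor r\,n_i\rfloor+1)/n_i\to r$, and $\dim C_i'/n_i\geq(n_i-\lfloor r\,n_i\rfloor-g_i)/n_i\to 1-r-1/A(q)\geq r'$, the last inequality being the hypothesis. Since $n_i=N_i\to\infty$, the sequence $(C_i,C_i')_i$ has the required length and rates.

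I do not expect a real obstacle once Proposition~\ref{constrmutint} is available; the only genuine content is the one-line feasibility argument placing $m_i$ between $g_i$ and $n_i$ while hitting both target rates, and it is driven entirely by the single inequality $r+r'\leq 1-\frac{1}{A(q)}$, with the strict positivity of $r'$ providing exactly the slack needed for $m_i<n_i$. The only care required is the routine bookkeeping with the floor functions and the phrase ``for all large $i$''.
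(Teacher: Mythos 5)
Your proof is correct and takes essentially the same approach as the paper: feed an optimal sequence of curves into Proposition~\ref{constrmutint} with $G$ the sum of all rational points, choose $m_i$ so that $m_i/n_i\to r+1/A(q)$, and read off the two rates. The only cosmetic difference is that you make the choice $m_i=\lfloor r\,n_i\rfloor+g_i$ explicit where the paper simply posits a sequence with the right asymptotics.
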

\begin{proof}
Let $X_i$ be curves forming an optimal sequence over $\Fq$.
Let $G_i=\sum_{P\in X_i(\Fq)}P$
be the sum of all rational points in $X_i$. 
Since $A(q)>1$, one has $n_i=\deg(G_i)=|X_i(\Fq)|>g_i$
for $i$ big enough.
Let $m_i$ be a sequence of integers such that
$\frac{m_i}{n_i}\longto r+\frac{1}{A(q)}$ as $i$ goes to infinity
(hence $g_i\leq m_i<n_i$ if $i$ is big enough).
Let $D_i$ be an arbitrary divisor of degree $m_i$ on $X_i$,
and apply the preceding Proposition~\ref{constrmutint}.
This gives mutually intersecting codes $(C_i,C_i')$, where
the rate of $C_i$ is at least
\beq
\frac{m_i+1-g_i}{n_i}\tend r+\frac{1}{A(q)}-\frac{1}{A(q)}=r
\eeq
and the rate of $C_i'$ is at least
\beq
\frac{n_i-m_i}{n_i}\tend 1-r-\frac{1}{A(q)}\geq r'
\eeq
as claimed.
\end{proof}

Remark that for $r=r'$,
this last corollary gives a family of mutually intersecting codes $(C,C')$
of asymptotic rate $\frac{1}{2}-\frac{1}{2A(q)}$.
This can be seen as a weak version of Theorem~2, which asserts
that this can be done with $C=C'$ (but with more restrictive conditions
on $q$).

\section{The construction}
\label{laconstruction}

From now on $K$ is assumed to be a \emph{perfect} field.

The main technical tool in the proof of Theorem~2 will be the following
``higher version'' of Lemma~\ref{l(A+P)=l(A)}:

\begin{lemma}
\label{l(A+2P)=l(A)}
Let $X$ be a curve over $K$ of genus $g$,
and let $A$ be a divisor on $X$ with $\deg(A)\leq g-3$
and
\beq
l(A)=0.
\eeq
Then for all points $P\in X(K)$ except perhaps for at most $4g$ of them,
we have
\beq
\label{l(A+2P)=0}
l(A+2P)=0.
\eeq
\end{lemma}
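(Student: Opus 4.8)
The plan is to reduce to the extreme case $\deg A=g-3$ and then translate, via Riemann--Roch, into a statement about a \emph{pencil} on $X$, where Riemann--Hurwitz provides the count. First the trivial reductions: if $\deg A<-2$ then $\deg(A+2P)<0$ for all $P$, so $l(A+2P)=0$ unconditionally, and if $|X(K)|\le 4g$ there is nothing to prove; so assume $-2\le\deg A\le g-3$ and $|X(K)|>4g$. Now apply Lemma~\ref{l(A+P)=l(A)} repeatedly, at the $i$-th step to a divisor of degree $\deg A+i-1\le g-4<g-2$ (using $|X(K)|>4g>g$ to find a non-exceptional point each time), to produce $R_1,\dots,R_{g-3-\deg A}\in X(K)$, repetitions allowed, with $l(A+R_1+\cdots+R_i)=0$ for every $i$. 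With $A''=A+R_1+\cdots+R_{g-3-\deg A}$ one has $\deg A''=g-3$, $l(A'')=0$, and $\cL(A+2P)\subseteq\cL(A''+2P)$, so the exceptional set for $A$ is contained in the one for $A''$. Hence we may assume $\deg A=g-3$ from now on.

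Set $B=\Omega-A$; then $\deg B=g+1$ and, by Riemann--Roch, $l(B)=l(A)+2=2$, so $|B|$ is a pencil. Let $F\ge 0$ be its fixed part, put $d=g+1-\deg F$, and let $\varphi\colon X\to\PP$ be the degree-$d$ morphism defined by the base-point-free system $|B-F|$, whose members are exactly the fibers of $\varphi$. For $P\in X(K)$ we have $\deg(A+2P)=g-1$, so Riemann--Roch gives $l(A+2P)=l(\Omega-A-2P)=l(B-2P)$; thus $P$ is exceptional exactly when some member of $|B|$ dominates $2P$. If $P\in\Supp F$ this holds, since $F\ge P$ and $\varphi^{-1}(\varphi(P))\ge P$, so $F+\varphi^{-1}(\varphi(P))\in|B|$ dominates $2P$. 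If $P\notin\Supp F$ it holds precisely when $\varphi^{-1}(\varphi(P))$, the unique member of $|B-F|$ through $P$, dominates $2P$, i.e.\ precisely when $\varphi$ is ramified at $P$. Therefore the exceptional set is contained in $\Supp F\cup\{\,P:\varphi\text{ ramified at }P\,\}$.

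To finish, assuming $\varphi$ separable, Riemann--Hurwitz gives $\deg\mathfrak d_\varphi=2g-2+2d=4g-2\deg F$ for the different, so there are at most $4g-2\deg F$ ramification points; combined with $|\Supp F|\le\deg F$ this bounds the exceptional set by $\deg F+(4g-2\deg F)=4g-\deg F\le 4g$. (Equivalently, every exceptional point lies in the zero divisor of the Wronskian $s_0\,ds_1-s_1\,ds_0$ of a basis of $\cL(B)$, a section of a line bundle of degree $2\deg B+(2g-2)=4g$.) The one delicate point is exactly this separability hypothesis --- equivalently, that $|\Omega-A|$ is not composed with the Frobenius, i.e.\ that the Wronskian above is not identically zero. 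This is automatic in characteristic $0$; in characteristic $p$ it must be extracted from $l(A)=0$ (or verified for the particular curves used in the construction), since an inseparable $\varphi$ would make every non-base point exceptional and destroy the bound. I expect this characteristic-$p$ issue to be the main obstacle in the write-up.
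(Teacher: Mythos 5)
Your approach runs parallel to the paper's: after the same reduction to $\deg A=g-3$ and the same Riemann--Roch translation to the pencil $B=\Omega-A$ with $\deg B=g+1$, $l(B)=2$, the paper also ends with a degree count that is exactly your parenthetical Wronskian remark made explicit. Concretely, the paper replaces $B$ by an effective representative, picks a basis $\{1,f\}$ of $\cL(B)$, and bounds $|\cS|$ by a local valuation analysis of $\div(df)$ at the four types of points ($P$ in or not in $\cS$, in or not in $\Supp B$), summing to $2g-2=\deg(\div df)\ge -2\deg B+|\cS|$, i.e.\ $|\cS|\le 4g$. Your fixed-part-plus-Riemann--Hurwitz bookkeeping is a valid and slightly more geometric repackaging of the same computation, and your characterization of the exceptional set as $\Supp F$ together with the ramification locus of $\varphi$ is correct.

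The gap you flag at the end is the real one, and it is not optional: without ruling out $df\equiv 0$ (equivalently, $\varphi$ inseparable), the whole count collapses. You cannot get this from $l(A)=0$ alone over an arbitrary field; the paper uses the standing hypothesis, announced at the start of Section~\ref{laconstruction}, that $K$ is \emph{perfect}. The argument is short: in characteristic $0$ nonconstancy of $f$ gives $df\ne 0$; in characteristic $p>0$, if $df=0$ then by perfectness $f=h^p$ for some $h\in K(X)$, and since $B\ge 0$ one gets $h\in\cL(\lfloor\tfrac{1}{p}B\rfloor)\subseteq\cL(B)$, so that $1,h,f=h^p$ are three $K$-linearly independent elements of $\cL(B)$ (linear dependence would make $h$ algebraic over $K$), contradicting $l(B)=2$. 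So the missing step is precisely ``perfectness of $K$ plus $l(B)=2$ forces separability,'' and once you insert it your proof closes.
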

\begin{proof}
We can assume $|X(K)|>4g\geq g$, otherwise there is nothing to prove.
Then, thanks to Lemma~\ref{l(A+P)=l(A)}, successively adding points
to $A$, we can find a divisor $A'\geq A$ with $\deg(A')=g-3$ and
$l(A')=0$. Then for any $P\in X(K)$ with $l(A+2P)>0$, we also
have $l(A'+2P)>0$. So we can replace $A$ with~$A'$, that is,
it suffices to prove
Lemma~\ref{l(A+2P)=l(A)} with $\deg(A)=g-3$.
In turn, by Riemann-Roch,
setting $B=\Omega-A$ where $\Omega$ is a canonical divisor on~$X$,
this is equivalent to the following statement:

\emph{If $B$ is a divisor on $X$ with $\deg(B)=g+1$ and $l(B)=2$,
then there are at most $4g$ points $P\in X(K)$ with $l(B-2P)>0$.}

Replacing $B$ by a linearly equivalent divisor, we can suppose $B\geq0$.
Let then $\{1,f\}$ be a basis of $\cL(B)$.
We will conclude by a degree argument on the differential form $df$.

First we claim that $df$ is non-zero. If $\car K=0$ this is true
because $f$ is non-constant. If $\car K=p>0$ then, since $K$ is
assumed perfect, $df=0$ means $f=h^p$ for some $h\in K(X)$.
But then $h\in\cL(\frac{1}{p}B)\subset\cL(B)$ and
$\{1,h,f\}$ are linearly independent in $\cL(B)$, contradicting
our hypothesis $l(B)=2$.

Let $\mathcal{S}=\{\,P\in X(K)\:|\;l(B-2P)>0\:\}$.
We have to show $|\mathcal{S}|\leq 4g$.

Now if $P$ is a closed point (of arbitrary degree) in $X$, we are
in one of these four mutually exclusive situations:

\begin{enumerate}[(i)]
\item $P\not\in \mathcal{S}\cup\Supp(B)$.
Then $v_P(f)\geq0$, and
$v_P(df)\geq 0$.
\item $P\in\Supp(B)\setminus\mathcal{S}$.
Then $v_P(B)\geq1$, and $v_P(df)\geq v_P(f)-1\geq -v_P(B)-1$ hence
\beq
v_P(df)\geq -2v_P(B).
\eeq
\item $P\in\mathcal{S}\setminus\Supp(B)$.
Consider the inclusions $\cL(B-2P)\subset\cL(B-P)\subset\cL(B)$.
By hypothesis $l(B)=2$ and $l(B-2P)>0$, and
since $1\in\cL(B)\setminus\cL(B-P)$, necessarily $\cL(B-P)=\cL(B-2P)$.

Now let $\alpha=f(P)$. Then $f-\alpha\in\cL(B-P)=\cL(B-2P)$,
so $v_P(f-\alpha)\geq2$, hence $v_P(d(f-\alpha))\geq1$.
But since $df=d(f-\alpha)$ we conclude:
\beq
v_P(df)\geq 1.
\eeq
\item $P\in\mathcal{S}\cap\Supp(B)$.
By hypothesis $v_P(f)\geq -v_P(B)$
and $v_P(B)\geq1$.
We claim it is impossible to have simultaneously $v_P(f)=-v_P(B)$
and $v_P(B)=1$.

For if it were the case, then $f\in\cL(B)\setminus\cL(B-P)$
and $1\in\cL(B-P)\setminus\cL(B-2P)$, so all inclusions
$\cL(B-2P)\subset\cL(B-P)\subset\cL(B)$ would be strict, contradicting
$l(B)=2$ and $l(B-2P)>0$.

So one (at least) of the inequalities $v_P(f)\geq -v_P(B)$
and $v_P(B)\geq1$ is strict. Thus
$v_P(df)\geq v_P(f)-1>-2v_P(B)$, that is:
\beq
v_P(df)\geq -2v_P(B)+1.
\eeq
\end{enumerate}
Now summing these inequalities we find
\beq
2g-2=\deg(\div df)=\sum_Pv_P(df)\deg(P)\geq -2\deg(B)+|\cS|
\eeq
and since $\deg(B)=g+1$ this gives $|\cS|\leq 4g$ as claimed.
\end{proof}

%

\begin{proposition}
\label{Main_Prop}
Let $X$ be a curve over $K$ of genus $g$, and suppose
\beq
\label{X>8g-2}
|X(K)|>4g.
\eeq
Let $G$ be a divisor on $X$, of degree $n=\deg(G)\in\Z$.
Then there exists a divisor $D$ on $X$ of degree
$\deg(D)=\left\lfloor\frac{n+g-1}{2}\right\rfloor$
(or equivalently: $g-2\leq\deg(2D-G)<g$), such that
\beq
l(2D-G)=0.
\eeq
\end{proposition}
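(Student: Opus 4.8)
The plan is to imitate the inductive construction from the proof of Proposition~\ref{constrmutint}, but with Lemma~\ref{l(A+2P)=l(A)} playing the role that Lemma~\ref{l(A+P)=l(A)} played there: instead of enlarging $D$ so that $D+D'-G$ grows by one point at a time, we enlarge $D$ one point at a time so that $2D-G$ grows by $2P$ at each step, staying in the kernel-vanishing regime $l(2D_j-G)=0$ throughout. This requires nothing deep beyond Lemma~\ref{l(A+2P)=l(A)} itself (the differential-form degree argument already done above), so the work is essentially bookkeeping.

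Concretely, I would first fix a rational point $P_0\in X(K)$ --- it exists since $|X(K)|>4g\geq 0$ --- and set $D_0=\big\lfloor\tfrac{n-1}{2}\big\rfloor\,P_0$, so that $\deg(2D_0-G)=2\big\lfloor\tfrac{n-1}{2}\big\rfloor-n\in\{-1,-2\}$ and hence $l(2D_0-G)=0$ automatically. Then, as long as $A=2D_j-G$ satisfies $\deg A\leq g-3$, I would invoke Lemma~\ref{l(A+2P)=l(A)} --- whose hypotheses $l(A)=0$ and $|X(K)|>4g$ are exactly what is available --- to find $P\in X(K)$ with $l(A+2P)=l\big(2(D_j+P)-G\big)=0$ (such $P$ exists because at most $4g$ points are excluded while $|X(K)|>4g$), and set $D_{j+1}=D_j+P$. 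Each step raises $\deg(2D_j-G)$ by $2$, so I stop as soon as it reaches the value $g-1$ (when $n\not\equiv g\pmod 2$) or $g-2$ (when $n\equiv g\pmod 2$); the final divisor $D$ then has $l(2D-G)=0$ and degree $\big\lfloor\tfrac{n+g-1}{2}\big\rfloor$.

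The only point requiring real attention --- and the main obstacle, such as it is --- is the degree and parity bookkeeping. One checks that $\deg(2D_0-G)$ and the stopping value both have the parity of $n$, so the step-$2$ progression indeed lands on it; that the stopping value is $\geq\deg(2D_0-G)$ (immediate once $g\geq 1$, while $g=0$ forces zero steps and $D=D_0$ already works); and --- the crucial input to Lemma~\ref{l(A+2P)=l(A)} --- that every intermediate $A=2D_j-G$ fed to the lemma has $\deg A\leq g-3$. The latter holds because the largest such degree occurs just before the final step and equals (stopping value)$\,-\,2\leq (g-1)-2=g-3$. I would dispose of the borderline low-genus situations ($g=0$, and the single step occurring when $g=1$) explicitly, and finish by verifying in each of the four parity cases that $\big\lfloor\tfrac{n-1}{2}\big\rfloor$ plus the number of steps equals $\big\lfloor\tfrac{n+g-1}{2}\big\rfloor$, equivalently that $g-2\leq\deg(2D-G)<g$.
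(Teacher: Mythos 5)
Your proposal is correct and is essentially identical to the paper's own proof: start from $D_0=\bigl\lfloor\tfrac{n-1}{2}\bigr\rfloor P_0$ (so $\deg(2D_0-G)\in\{-1,-2\}$) and add one rational point at a time, invoking Lemma~\ref{l(A+2P)=l(A)} at each step to keep $l(2D_j-G)=0$, until $\deg(2D_j-G)$ lands in $\{g-2,g-1\}$. The paper phrases the loop as running $i$ from $0$ to $N=\bigl\lfloor\tfrac{n+g-1}{2}\bigr\rfloor-\bigl\lfloor\tfrac{n-1}{2}\bigr\rfloor$ rather than via a stopping condition on $\deg A$, but the bookkeeping and the use of the lemma coincide exactly with yours.
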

\begin{proof}
For $0\leq i\leq N=\left\lfloor\frac{n+g-1}{2}\right\rfloor-\left\lfloor\frac{n-1}{2}\right\rfloor$ we construct divisors $D_i$
such that
\beq
\label{conditions2}
\deg(D_i)=i+\left\lfloor\frac{n-1}{2}\right\rfloor\qquad\textrm{and}\qquad l(2D_i-G)=0
\eeq
iteratively
as follows:
\begin{itemize}
\item Start with any
divisor $D_0$ 
of degree $\left\lfloor\frac{n-1}{2}\right\rfloor$,
hence $\deg(2D_0-G)<0$ and $l(2D_0-G)=0$ as asked.
For example take $P_0\in X(K)$ and set
$D_0=\left\lfloor\frac{n-1}{2}\right\rfloor P_0$ ---
remark that $X(K)$ is non-empty, because of \eqref{X>8g-2}.
\item Suppose up to some $i<N$, we have a divisor $D_i$
satisfying \eqref{conditions2}.
The divisor $A=2D_i-G$ then satisfies $-2\leq\deg(A)<g-2$
and $l(A)=0$, so by~\eqref{X>8g-2} and
Lemma \ref{l(A+2P)=l(A)} we can find $P\in X(K)$ such that $l(A+2P)=0$.
Then we set $D_{i+1}=D_i+P$, and $D_{i+1}$ satisfies \eqref{conditions2}.
\item This ends when $i=N$, and we can set $D=D_N$.
\end{itemize}
\end{proof}
Remark that the construction given in the proof involves
roughly $g/2$ iterations, and each step requires testing
at most $4g+1$ points.
So, as soon as a curve of genus $g$, 
as well as sufficiently many of its rational points, and the various
Riemann-Roch spaces $\cL(A)$, can be computed in time polynomial in $g$,
then the overall construction will be polynomial in $g$.

\begin{corollary}
Let $X$ be a curve over $K$ of genus $g$,
such that $|X(K)|>4g$.
Let $n$ be an integer such that
$g<n\leq|X(K)|$. Then there exists a linear intersecting code $C$ over $K$,
of length $n$ and dimension
\beq
\dim C\,\geq\,\left\lfloor\frac{n+g-1}{2}\right\rfloor+1-g\;\geq\:\frac{n-g}{2}.
\eeq
\end{corollary}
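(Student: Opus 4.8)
The plan is to derive the Corollary as an essentially immediate consequence of Proposition~\ref{Main_Prop} together with Xing's criterion (Theorem~\ref{critere}) and Riemann--Roch. First I would choose an ordering $P_1,\dots,P_n$ of $n$ rational points of $X$ and form the divisor $G=P_1+\cdots+P_n$ of degree $n$; this is legitimate precisely because $n\leq|X(K)|$, and the hypothesis $|X(K)|>4g$ is exactly the running assumption \eqref{X>8g-2} needed to invoke Proposition~\ref{Main_Prop}. Applying that proposition yields a divisor $D$ with $\deg(D)=\lfloor(n+g-1)/2\rfloor$ and $l(2D-G)=0$.

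Next I would check that $\deg(D)<n$, so that Theorem~\ref{critere} applies: since $g<n$ we have $n+g-1<2n$, hence $\lfloor(n+g-1)/2\rfloor<n$. With $\deg(D)<n$ and $l(2D-G)=0$, Xing's criterion gives that $C=C(G,D)$ is a linear (self-)intersecting code of length $n$ and dimension exactly $l(D)$.

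Finally I would bound $l(D)$ from below by Riemann--Roch: $l(D)\geq\deg(D)+1-g=\lfloor(n+g-1)/2\rfloor+1-g$, which is the first claimed inequality. For the second inequality, I would write $\lfloor(n+g-1)/2\rfloor\geq(n+g-2)/2$, so that $\lfloor(n+g-1)/2\rfloor+1-g\geq(n+g-2)/2+1-g=(n-g)/2$. That closes the argument.

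I do not expect any genuine obstacle here — this is a packaging step. The only points requiring a line of care are the numerical check $\deg(D)<n$ (which uses $n>g$) and the floor estimate in the last inequality; everything substantive was already done in Proposition~\ref{Main_Prop} and in the recalled results of Section~\ref{Xingsection}.
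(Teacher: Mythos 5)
Your proposal is correct and follows exactly the paper's own proof: build $G$ from $n$ rational points, invoke Proposition~\ref{Main_Prop} to get $D$ with $l(2D-G)=0$, apply Theorem~\ref{critere} (Xing's criterion) to $C=C(G,D)$, and finish with Riemann--Roch. The paper states this more tersely (it omits the explicit checks that $\deg(D)<n$ and the floor estimate $\lfloor(n+g-1)/2\rfloor\geq(n+g-2)/2$), but the substance is identical.
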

\begin{proof}
Let $G=P_1+\cdots+P_n$ for pairwise distinct $P_i\in X(K)$.
The proposition gives a divisor $D$ on $X$ of degree
$\deg(D)=\left\lfloor\frac{n+g-1}{2}\right\rfloor<n$
with $l(2D-G)=0$.
The conclusion then follows from Theorem~\ref{critere},
with $C=C(G,D)$.
\end{proof}

We can now proceed with:

\begin{proof}[Proof of Theorem~2]
Let $X_i$ be curves forming an optimal sequence over $\Fq$,
let $g_i$ be the genus of $X_i$, and let $G_i$ be the sum
of all points in $X_i(\Fq)$, so $n_i=\deg(G_i)=|X_i(\Fq)|$.
By definition we have $g_i\to\infty$ and $n_i/g_i\to A(q)>4$,
so $n_i>4g_i$ if $i$ is big enough. The preceding corollary then
gives a linear intersecting code $C_i$ over $\Fq$ of rate at least
\beq
\frac{1-g_i/n_i}{2}\tend\frac{1}{2}-\frac{1}{2A(q)}
\eeq
as asked.

If $q\geq25$ is a square, then $A(q)=q^{1/2}-1$, and the conclusion
follows, except perhaps for $q=25$, $A(q)=4$.
But in this last particular case, we know that the sequence of
modular curves $X_0(11\ell)$, for $\ell\geq13$ prime,
has genus $g_\ell=\ell$ and number of points
$|X_0(11\ell)(\F_{25})|\geq 4\ell+4>4g_\ell$,
and we conclude in the same way.
\end{proof}
\noindent
As regards constructiveness issues in this last proof, 
note that when $q$ is a square, such optimal sequences are known
explicitly (see for example \cite{GS}),
and all computations in the proposition can be made
in polynomial time,
hence the overall construction can be made in polynomial time
(although perhaps with constants and exponents too big to be
really useful in practice).

\begin{remark}
\label{rem_finale}
We finish by noting two possible improvements on Theorem~2.
\begin{enumerate}[(i)]
\item In fact the hypothesis $A(q)>4$
(or $q=25$, $A(q)=4$) in Theorem~2 is not optimal.
This constant~$4$ comes from Lemma~\ref{l(A+2P)=l(A)},
and it turns out that the estimation in this lemma
(as well as the one in Lemma~\ref{l(A+P)=l(A)}, by the way)
can be slightly
improved, as done in~\cite{rang_mult}
(the proof is more technically involved).

From this stronger version of the lemma one can show that
the conclusion in Theorem~2 holds already
when $A(q)\geq 4-\frac{12q^2-4}{q^4+2q^2-1}$
(see~\cite{rang_mult} for more details).

Clearly this improvement is small, not to say unimpressive,
and for the application to Theorem~1, we only need the case
$q=121$, $A(q)=10$, so we can leave such refinements apart.
Nevertheless, further relaxing of the condition on $q$ in
Theorem~2 could have interest by itself.
\item Theorem~2 is concerned only in improving the case $s=2$
of Xing's bound \cite{Xing2002} (we will keep his notations,
so our $R_q$ becomes $R_q(2)$),
since this is all we need
for Theorem~1 again. However,
following \cite{ITW2010},
it is natural to conjecture that,
for any $s$, and maybe under suitable conditions on $q$,
\beq
\label{bgen}
R_q(s)\geq\frac{1}{s}-\frac{1}{A(q)}+\frac{1}{sA(q)}.
\eeq
If one tries to prove \eqref{bgen}
with a method similar to the one given here,
one will construct inductively some
divisors $A$ of controlled degree and dimension $l(A)=0$, and
the main point will be to show that, given
sufficiently many points, there is one of them, say $P$, such that
$l(A+sP)=0$
(of which Lemma~\ref{l(A+P)=l(A)} is the case $s=1$
and Lemma~\ref{l(A+2P)=l(A)} the case $s=2$).
Equivalently (see e.g. \cite{Laksov} or \cite{SV})
one has to prove that $B=\Omega-A$ has order sequence at $P$
starting with $\epsilon_0(P)=0,\dots,\epsilon_{s-1}(P)=s-1$.
A necessary condition for this to be possible, is that $B$ has \emph{generic}
order sequence starting with $\epsilon_0=0,\dots,\epsilon_{s-1}=s-1$.
If this holds, the existence of $P$ can be derived from a Pl\"ucker formula
(\cite{Laksov}, Theorem~9).

For $s=2$, it is known
that any complete linear system has generic order sequence
starting with $\epsilon_0=0$ and $\epsilon_1=1$.
In our situation, this is equivalent to the non-vanishing of $df$
established during the proof of Lemma~\ref{l(A+2P)=l(A)} --- and
then the proof of Lemma~\ref{l(A+2P)=l(A)} proceeds with a variant
of the Pl\"ucker formula suitable for our particular case (classically,
this relies on Wronskians; in the proof given here, the Wronskian
is just $df$).

Unfortunately, for $s\geq3$, not all divisors $B$ have
generic order sequence starting with
$\epsilon_0=0,\dots,\epsilon_{s-1}=s-1$.
While this is known to hold for
``most'' divisors \cite{Neeman},
it might be difficult to ensure that it is so for
the particular divisors constructed in an inductive procedure
such as ours.
\end{enumerate}
\end{remark}


\begin{thebibliography}{1}

\bibitem{AlonSp}
N.~Alon and J.~H.~Spencer,
\emph{The probabilistic method, 2nd ed.},
Wiley-Interscience, 2000.

\bibitem{Blackburn}
S.~R.~Blackburn,
\emph{Frameproof codes},
SIAM J. Discrete Math. \textbf{16} (2003) 499--510.

\bibitem{BS}
D.~Boneh and J.~Shaw, \emph{Collusion-secure fingerprinting for digital data},
IEEE Trans. Inform. Theory \textbf{44} (1998) 1897--1905.

\bibitem{CLemp}
G.~Cohen and A.~Lempel,
\emph{Linear intersecting codes},
Discr. Math.~\textbf{56} (1985) 35--43.

\bibitem{CS}
G.~Cohen and H.~G.~Schaathun,
\emph{Asymptotic overview on separating codes},
Reports in Informatics \textbf{248},
Univ.~Bergen, 2003.


\bibitem{DV}
V.~G.~Drinfeld and S.~G.~Vladut,
\emph{Number of points of an algebraic curve},
Funct. Anal.~\textbf{17} (1983) 53--54.


\bibitem{FGU69}
A.~D.~Friedman, R.~L.~Graham, and J.~D.~Ullman,
\emph{Universal single transition time asynchronous state assignments},
IEEE Trans. Comput. \textbf{18} (1969) 541--547.

\bibitem{GS}
A.~Garcia and H.~Stichtenoth,
\emph{A tower of Artin-Schreier extensions of function fields attaining the Drinfeld-Vladut bound},
Invent. Math.~\textbf{121} (1995) 211--222.


\bibitem{Ihara}
Y.~Ihara,
\emph{Some remarks on the number of rational points of algebraic curves over finite fields},
J.~Fac. Sci. Univ. Tokyo Sect.~IA Math.~\textbf{28} (1981) 721--724.


\bibitem{Komlos}
J.~Koml\'os, unpublished, cited in \cite{Miklos}. 

\bibitem{Korner}
J.~K\"orner, \emph{On the extremal combinatorics of the Hamming space},
J.~Combin. Theory Ser. A  \textbf{71}  (1995) 112--126.

\bibitem{KrSa}
A.~Krasnopeev and Yu.~L.~Sagalovich,
``The Kerdock codes and separating systems'',
in: \emph{ACCT-8, Tsarskoe Selo (St Petersbourg) 2002}, 165--167.

\bibitem{Laksov}
D.~Laksov,
\emph{Wronskians and Pl\"ucker formulas for linear systems on curves},
Ann. Sci. \'Ecole Norm. Sup. \textbf{17} (1984) 45--56.

\bibitem{LW}
A.~Lempel and S.~Winograd,
\emph{A new approach to error-correcting codes},
IEEE Trans. Inform. Theory \textbf{23} (1977) 503--508.

\bibitem{Miklos}
D.~Mikl\'os,
\emph{Linear binary codes with intersection properties},
Discr. Appl. Math.~\textbf{9} (1984) 187--196.

\bibitem{Neeman}
A.~Neeman,
\emph{Weierstrass points in characteristic $p$},
Invent. Math. \textbf{75} (1984) 359--376.

\bibitem{ITW2010}
H.~Randriam,
``Hecke operators with odd determinant and binary frameproof codes beyond the probabilistic bound?'',
in \emph{Proceedings of 2010 IEEE Information Theory Workshop (ITW 2010 Dublin)},
to appear.

\bibitem{21sep_prelim}
H.~Randriambololona,
preliminary version of the present article:
\emph{$(2,1)$-separating systems beyond the probabilistic bound},
April 2011,
available online at:
\url{http://arxiv.org/abs/1010.5764v6}

\bibitem{rang_mult}
H.~Randriambololona,
\emph{Diviseurs de la forme $2D-G$ sans sections et rang de la multiplication dans les corps finis},
preprint, available online at:
\url{http://arxiv.org/abs/1103.4335}

\bibitem{Renyi}
A.~R\'enyi,
\emph{On random generating elements of a finite Boolean algebra},
Acta Sci. Math. Szeged \textbf{22} (1961) 75--81.

\bibitem{Saga}
Yu.~L.~Sagalovich,
\emph{Cascade codes of automata states},
Probl. Peredachi Inf. \textbf{14} (1978) 77--85.

\bibitem{SagaChili}
Yu.~L.~Sagalovich and A.~G.~Chilingarjan,
\emph{Separating systems and new scopes of its application},
Information Processes \textbf{9} (2009) 225--248.

\bibitem{Stichtenoth}
H.~Stichtenoth,
\emph{Algebraic function fields and codes},
Universitext, Springer-Verlag, 1993.

\bibitem{STW}
D.~R.~Stinson and R.~Wei,
\emph{Combinatorial properties and constructions of traceability schemes and frameproof codes},
SIAM J.~Discr. Math. \textbf{11} (1998) 41--53.

\bibitem{SV}
K.-O.~St\"ohr and F.~Voloch,
\emph{Weierstrass points and curves over finite fields},
Proc. London Math. Soc. \textbf{52} (1986) 1--19.

\bibitem{TVZ}
M.~A.~Tsfasman, S.~G.~Vladut, and T.~Zink,
\emph{Modular curves, Shimura curves, and Goppa codes better than Varshamov-Gilbert bound},
Math. Nachr.~\textbf{109} (1982) 21--28.

\bibitem{Xing2002}
C.~Xing, \emph{Asymptotic bounds on frameproof codes},
IEEE Trans. Inform. Theory \textbf{48} (2002) 2991--2995.

\end{thebibliography}
\end{document}